\newcommand{\N}{\ensuremath{\mathbb N}}
\newcommand{\Z}{\ensuremath{\mathbb Z}}
\newcommand{\R}{\ensuremath{\mathbb R}}
\newcommand{\Id}{\ensuremath{I}}
\DeclareMathOperator{\diag}{\ensuremath{diag}}
\theoremstyle{plain}
\newtheorem{theorem}[equation]{Theorem}
\newtheorem{corollary}[equation]{Corollary}
\newtheorem{proposition}[equation]{Proposition}
\newtheorem{lemma}[equation]{Lemma}
\newtheorem{conjecture}[equation]{Conjecture}
\theoremstyle{definition}
\newtheorem{definition}[equation]{Definition}
\newtheorem{remark}[equation]{Remark}
\theoremstyle{remark}
\numberwithin{equation}{section}
\begin{document}
\title{The critical exponent conjecture for powers of
doubly nonnegative matrices}
\author[Dominique Guillot \and Apoorva Khare \and Bala
Rajaratnam]{Dominique Guillot \and Apoorva Khare \and Bala
Rajaratnam\\Stanford University}
\address{Departments of Mathematics and Statistics, Stanford University}
\thanks{Partial support for this work came for D.G. from an NSERC
Postdoctoral Fellowship (Canada); for A.K. from the DARPA Grant YFA
N66001-11-1-4131; and for B.R. from grants NSF DMS-1106642 and DARPA-YFA
N66001-11-1-4131.}
\date{\today}
\subjclass[2010]{15B48 (primary); 26A84, 26A48 (secondary)}
\keywords{Doubly nonnegative matrix, critical exponent, interpolation
polynomial}

\begin{abstract}
Doubly non-negative matrices arise naturally in many setting including Markov random fields (positively banded graphical models) and in the convergence analysis of Markov chains. In this short note, we settle a recent conjecture by C.R.~Johnson et al.~[Linear Algebra Appl.~435 (2011)] by proving that the critical exponent
beyond which all continuous conventional powers of $n$-by-$n$ doubly
nonnegative matrices are doubly nonnegative is exactly $n-2$. We show
that the conjecture follows immediately by applying a general
characterization from the literature. We prove a stronger form of the
conjecture by classifying all powers preserving doubly nonnegative
matrices, and proceed to generalize the conjecture for broad classes of functions.
We also provide different approaches for settling the original
conjecture.
\end{abstract}
\maketitle

\section{Introduction and main result}
The study of operations preserving different notions of positivity is an important topic in 
matrix analysis (see e.g.~Bhatia \cite{bhatiaPDM}, Horn and Johnson \cite{Horn_and_Johnson_Topics}). 
The purpose of this short note is to prove the \emph{critical exponent
conjecture} for doubly nonnegative matrices formulated recently by
Johnson {\it et al.} \cite{johnson_et_al_2011}. A real symmetric matrix
$A$ is said to be \emph{doubly nonnegative} if it is both positive
semidefinite, and  entrywise nonnegative. Given a real function $f: [0, \infty) \rightarrow [0, \infty)$ and a doubly nonnegative matrix $A$, we define the matrix $f(A)$ using the
spectral decomposition of $A$. Namely, if $A = U D U^T$, where $U$ is
orthogonal and $D = \diag(d_{11}, \dots, d_{nn})$ is diagonal, define
$f(A) := U f(D) U^T$, where $f(D) := \diag(f(d_{11}), \dots, f(d_{nn}))$.
Assuming $A$ is doubly nonnegative, it is very natural to seek conditions
under which $f(A)$ is also doubly nonnegative.

A similar question can be asked when the function $f$ is applied
entrywise to the elements of $A$. Denote by $f[A] := (f(a_{ij}))$. In the
particular case where $f(x) = x^\alpha$ for $0 \leq \alpha \in \R$, it has been shown by FitzGerald
and Horn \cite{FitzHorn} (see also Horn and Johnson \cite[Chapter
6.3]{Horn_and_Johnson_Topics}) that $f[A]$ is doubly nonnegative for
every doubly nonnegative matrix $A$ if and only if $\alpha \in \N \cup [n-2, \infty)$. Note that in each of the two questions above, one of the two
nonnegativity conditions is merely used to ensure that $f(A)$ or $f[A]$ is
well-defined, and the real question is whether the other notion of
nonnegativity is preserved.

Johnson {\it et al.}~are interested in the first question above for the
power functions $f(x) = x^\alpha$ for real $\alpha$. Namely, which conventional powers
preserve the set of doubly nonnegative matrices of a given order? This problem was also considered by Audenaert \cite{Audenaert}. In \cite{johnson_et_al_2011}, the
following phase transition was shown to occur for every integer $n>0$.

\begin{theorem}[{\cite[Theorem 2.1]{johnson_et_al_2011}}]
There is a function $\mu: \N \rightarrow [0, \infty)$ such that for any $n$-by-$n$ doubly
nonnegative matrix $A$, the matrix $f(A) = A^\alpha$ is doubly
nonnegative for $\alpha \geq \mu(n)$.
\end{theorem}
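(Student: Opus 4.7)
Since $A$ is positive semidefinite, writing $A = UDU^T$ with $D=\diag(d_{11},\dots,d_{nn})$ and $d_{ii}\geq 0$ automatically gives $A^\alpha = UD^\alpha U^T$ positive semidefinite for every $\alpha\geq 0$. The substance of the theorem therefore lies entirely in the entrywise nonnegativity of $A^\alpha$ for all sufficiently large $\alpha$, uniformly over the $n$-by-$n$ doubly nonnegative matrices $A$.

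The plan is to combine the spectral asymptotics of $A^\alpha$ with the Perron-Frobenius theorem. Write $A = \sum_{i=1}^n \lambda_i v_i v_i^T$ with $\lambda_1\geq\lambda_2\geq\cdots\geq\lambda_n\geq 0$ and orthonormal eigenvectors $v_i$, so that $A^\alpha = \sum_i \lambda_i^\alpha v_i v_i^T$. When $A$ is nonzero and irreducible, Perron-Frobenius provides $\lambda_1>\lambda_2$ and an eigenvector $v_1$ with strictly positive entries. Thus $\lambda_1^{-\alpha} A^\alpha \to v_1 v_1^T$ entrywise as $\alpha\to\infty$, and the limit is entrywise strictly positive, so for every such $A$ there is a threshold $\mu(A)<\infty$ beyond which $A^\alpha$ is entrywise positive.

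When $A$ is reducible, the combined conditions of entrywise nonnegativity and symmetric positive semidefiniteness force $A$, after a common permutation of rows and columns, into block diagonal form with each block either zero or irreducible doubly nonnegative of strictly smaller size. Applying the irreducible argument to each diagonal block handles this case, and higher multiplicities of $\lambda_1$ are absorbed as well: the top spectral projection is block diagonal and restricts to the Perron projection on each block attaining $\lambda_1$, hence has nonnegative entries.

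The main obstacle is uniformity, namely producing $\mu(n):=\sup_A \mu(A) < \infty$ over all doubly nonnegative $A$ of size $n$. By scaling it suffices to work on the compact slice $\{A\ \text{doubly nonnegative}:\operatorname{tr}(A)=1\}$. The trouble is that $\mu(A)$ depends quantitatively on the spectral gap between $\lambda_1$ and $\lambda_2$ and on the smallest positive entry of $v_1 v_1^T$, both of which can degenerate as $A$ approaches the boundary between different reducibility patterns. I would handle this by stratifying the slice according to the finitely many zero patterns compatible with a doubly nonnegative matrix of size $n$ and running the asymptotic argument on each stratum, combined with an upper semicontinuity argument for $\mu(\cdot)$ in $A$ applied to the continuous function $(A,\alpha)\mapsto \min_{i,j}(A^\alpha)_{ij}$. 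This yields the existence of $\mu(n)$ but not the sharp value; the bound $\mu(n)=n-2$ promised in the abstract should instead be obtained from the polynomial interpolation characterization announced there, bypassing the delicate uniformity analysis altogether.
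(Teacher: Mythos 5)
Your Perron--Frobenius reduction is sound matrix-by-matrix: positive semidefiniteness of $A^\alpha$ is automatic, and for irreducible $A\in DN_n$ the spectral radius $\lambda_1$ is a simple eigenvalue with strictly positive eigenvector $v_1$ (the PSD hypothesis rules out $-\lambda_1$ as an eigenvalue), so $\lambda_1^{-\alpha}A^\alpha\to v_1v_1^T$ entrywise and $A^\alpha$ is eventually entrywise positive; symmetry does force a reducible $A$ into block-diagonal form after permutation, so the reducible case reduces to smaller irreducible blocks. The genuine gap is the uniformity step, which you flag but do not close, and which is in fact the entire content of the theorem. The stratification-plus-semicontinuity plan does not work as described: the strata of fixed zero pattern are not closed, so no compactness argument runs inside a single stratum, and the per-matrix threshold $\mu(A)$ is easily seen to be \emph{lower} semicontinuous (if $A_k\to A$ and $A_k^\alpha\in DN_n$ for all $\alpha>c$ and all $k$, then $A^\alpha\in DN_n$ for all $\alpha>c$, since $DN_n$ is closed), which yields an attained infimum, not the supremum bound you need. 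Upper semicontinuity of $\mu$ is neither established nor obvious. Worse, the boundary between strata is precisely where the Perron structure degenerates: as an irreducible sequence $A_k$ approaches a reducible limit $A$, either the gap $\lambda_2(A_k)/\lambda_1(A_k)$ tends to $1$ or the limiting Perron projection acquires zero entries, and your argument provides no control on how slowly $A_k^\alpha$ becomes nonnegative as this happens.

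This is a genuinely different route from the one the paper relies on. The paper quotes this statement from Johnson et al.\ without reproving it, but the machinery it develops subsumes it quantitatively: Theorem \ref{Tmicc} (Micchelli--Willoughby) reduces ``$g$ preserves $DN_n$'' to nonnegativity of the divided differences of $g$ of orders $1,\dots,n-1$ on $[0,\infty)$, and for $g(x)=x^\alpha$ this is an explicit derivative computation showing the preserving exponents are exactly $\N\cup[n-2,\infty)$ (Corollary \ref{cor:strong_conj}), so that $\mu(n)=n-2$ works. That gives existence \emph{and} the sharp threshold at once, and sidesteps the degeneracy issue entirely because the bound is pointwise in the eigenvalues rather than asymptotic. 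To rescue your spectral approach you would need a uniform, quantitative lower bound on the entries of $A^\alpha$ over the trace-one slice of $DN_n$ — essentially you would end up rediscovering the divided-difference or Newton-interpolation estimate that underlies Theorem \ref{Tmicc} — rather than appealing to a semicontinuity property of $\mu(\cdot)$ that has not been proved.
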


The smallest such $\mu(n)$ always exists by continuity and is denoted by
$m(n)$. The quantity $m(n)$ is called the (conventional) \emph{critical
exponent} (see \cite{johnson_et_al_2011}), and it is of great interest to
identify it. Johnson et al.~\cite{johnson_et_al_2011} provide a lower
bound for $m(n)$ for all $n$, and a sharp upper bound for small values of
$n$.

\begin{theorem}[{\cite[Theorems 3.1, 3.2,
4.1]{johnson_et_al_2011}}]\label{Tjohnson}
The critical exponent $m(n) \geq n-2$. If $n<6$, then the critical
exponent $m(n) = n-2$.
\end{theorem}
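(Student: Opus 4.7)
The plan is to establish the two halves of the statement separately: the universal lower bound $m(n) \geq n - 2$ for every $n \geq 2$, and the matching upper bound $m(n) \leq n - 2$ in the restricted range $n \in \{2,3,4,5\}$.

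For the lower bound, I would produce counterexamples via a perturbative family $A_t = I + tB$ with $t > 0$ small, where $B$ is a doubly nonnegative matrix chosen so that certain off-diagonal entries of its powers vanish for small exponents. A concrete choice is $B = cI + T$ with $T$ the adjacency matrix of the path $1\!-\!2\!-\!\cdots\!-\!n$ and $c \geq 2\cos(\pi/(n+1))$ large enough to force $B$ to be positive semidefinite (so $A_t$ is doubly nonnegative). Because the shortest walk between vertices $1$ and $n$ has length $n-1$, we have $(T^k)_{1n} = 0$ for $k < n-1$ and $(T^{n-1})_{1n} = 1$, so $(B^k)_{1n} = 0$ for $k < n-1$ and $(B^{n-1})_{1n} = 1$. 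The convergent binomial series
\begin{equation*}
A_t^\alpha \;=\; \sum_{k \geq 0} \binom{\alpha}{k}\, t^k\, B^k
\end{equation*}
then has $(1,n)$-entry $\binom{\alpha}{n-1}\, t^{n-1} + O(t^n)$ as $t \to 0^+$. For $\alpha \in (n-3,\, n-2)$, the Pochhammer coefficient $\binom{\alpha}{n-1} = \alpha(\alpha-1)\cdots(\alpha-(n-2))/(n-1)!$ has exactly one negative factor, namely $\alpha-(n-2)$, and is therefore strictly negative. Hence $(A_t^\alpha)_{1n} < 0$ for every sufficiently small $t > 0$, and letting $\alpha \nearrow n-2$ yields $m(n) \geq n-2$.

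For the upper bound in the regime $n \leq 5$, the task is to show that whenever $A$ is an $n \times n$ DN matrix and $\alpha \geq n-2$, every entry of $A^\alpha$ is nonnegative. For $n = 2$ this is a direct calculation: if $A = \begin{pmatrix} a & b \\ b & c \end{pmatrix}$ has eigenvalues $\lambda_\pm \geq 0$, then $A^\alpha$ is a nonnegative affine combination of $A$ and $I$ whose off-diagonal entry equals $b \cdot (\lambda_+^\alpha - \lambda_-^\alpha)/(\lambda_+ - \lambda_-) \geq 0$. For $n \in \{3,4,5\}$ I would expand $A^\alpha = \sum_i \lambda_i^\alpha\, u_i u_i^T$ via the spectral decomposition and run a case analysis on the sign patterns of the orthonormal eigenvectors $u_i$: the top eigenvector has nonnegative entries by Perron--Frobenius (since $A$ is entrywise nonnegative), and orthogonality plus the DN hypothesis constrains how negative entries may appear in the remaining eigenvectors. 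The hypothesis $\alpha \geq n-2$ enters through a power-mean--type inequality on $\lambda_1^\alpha, \ldots, \lambda_n^\alpha$, ensuring that the dominant rank-one contribution $\lambda_n^\alpha u_n u_n^T$ outweighs the mixed-sign contributions from lower-ranked eigenpairs.

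The principal technical obstacle lies in the upper bound for $n = 4$ and $n = 5$: the number of admissible sign configurations for the non-Perron eigenvectors, together with the corresponding spectral inequalities that must be verified, grows rapidly with $n$, and the case analysis becomes intractable past $n = 5$. This combinatorial blowup is precisely what leaves the full conjecture $m(n) = n-2$ open for $n \geq 6$, and motivates the alternative framework developed in the present paper.
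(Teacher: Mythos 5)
Your proposal splits into two halves of very different quality. The lower bound is fine: the perturbative family $A_t = I + tB$ built from the path graph, the binomial expansion $A_t^\alpha = \sum_{k\ge 0}\binom{\alpha}{k}t^k B^k$, the observation that $(B^k)_{1n}=0$ for $k<n-1$, and the sign of $\binom{\alpha}{n-1}$ for $\alpha\in(n-3,n-2)$ together give $(A_t^\alpha)_{1n}<0$ for small $t$, hence $m(n)\ge n-2$. This is essentially the same kind of tridiagonal counterexample used by Johnson et al., and it is correct.

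The upper bound for $n\in\{3,4,5\}$, however, is where the actual content of the theorem lies, and there you have no proof, only a stated intention. "Case analysis on the sign patterns of the eigenvectors" plus "a power-mean--type inequality ensuring that the dominant rank-one contribution $\lambda_n^\alpha u_nu_n^T$ outweighs the mixed-sign contributions" is not substantiated anywhere, and the proposed mechanism is wrong as stated: the Perron term does not in general dominate the remaining terms entrywise, and the hypothesis $\alpha\ge n-2$ does not act through any such domination. Your own lower-bound family exposes this: for $A=I+tB$ with $t$ small, all eigenvalues cluster near $1$, no single rank-one summand dominates, and the sign of an off-diagonal entry of $A^\alpha$ is decided by delicate cancellations --- equivalently by the signs of the divided differences of $x^\alpha$ of orders up to $n-1$, which is exactly the quantity that distinguishes $\alpha\ge n-2$ (and $\alpha\in\N$) from $\alpha\in(m,m+1)$ with $m\le n-3$. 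Note also that the present paper does not reprove the quoted theorem at all: it cites it, and then obtains the upper bound for \emph{every} $n$ at once via the Micchelli--Willoughby characterization (Theorem \ref{Tmicc} together with the mean value theorem for divided differences gives Corollary \ref{cor:strong_conj}, i.e.\ $x^\alpha$ preserves $DN_n$ iff $\alpha\in\N\cup[n-2,\infty)$), with an alternate elementary route through Proposition \ref{Preduction} and the integral representation of $x^q$. So the combinatorial blowup you describe is not an intrinsic obstruction but an artifact of the eigenvector-sign approach; to make your proposal a proof you would either have to actually carry out and verify the $n=3,4,5$ case analyses (which you concede you have not), or replace that step by a divided-difference/interpolation argument of the type used here.
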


\noindent The authors then proceed to conjecture if the above result is
true in general:

\begin{conjecture}[{\cite[Johnson, Lins, and Walch,
2011]{johnson_et_al_2011}}]\label{Ccritexp}
For all integers $n \geq 2$, the critical exponent $m(n) = n-2$. 
\end{conjecture}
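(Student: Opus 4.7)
The plan is to establish the matching upper bound $m(n) \le n-2$, since the lower bound $m(n) \ge n-2$ is already part of Theorem~\ref{Tjohnson}. Fix an $n \times n$ doubly nonnegative matrix $A$ and an exponent $\alpha \ge n-2$. Because $A$ is positive semidefinite and $\alpha \ge 0$, $A^\alpha$ is automatically positive semidefinite, so the task reduces to showing that $A^\alpha$ has nonnegative entries.

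The key property of $f(x) = x^\alpha$ I would exploit is that for $\alpha \ge n-2$ the first $n-1$ derivatives
$$f^{(k)}(x) = \alpha(\alpha-1)\cdots(\alpha-k+1)\, x^{\alpha-k}$$
are nonnegative on $[0,\infty)$: for every $k \le n-1$ the factor $\alpha-k+1 \ge (n-2)-(n-1)+1 = 0$, and all earlier factors are no smaller. Equivalently, every divided difference of $f$ of order at most $n-1$ at points of $[0,\infty)$ is nonnegative. First I would reduce by a density/continuity argument to matrices $A$ having $n$ distinct positive eigenvalues $\lambda_1 < \cdots < \lambda_n$, so that $A^\alpha = p(A)$ for the unique polynomial $p$ of degree $\le n-1$ that agrees with $x^\alpha$ at $\lambda_1,\ldots,\lambda_n$.

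At this point the proof would invoke the general characterization alluded to in the abstract: a continuous function on $[0,\infty)$ whose divided differences of orders $1,\ldots,n-1$ are nonnegative produces, under spectral application, an entrywise nonnegative matrix on every $n\times n$ doubly nonnegative matrix. The main obstacle will be locating the precise form of this characterization in the existing literature and verifying that its hypotheses are exactly the derivative/divided-difference conditions established above; the derivative computation itself is essentially mechanical. Once that matching is done, feeding $f(x) = x^\alpha$ with $\alpha \ge n-2$ into the characterization finishes the proof. As a safety net, should the off-the-shelf statement not apply verbatim, I would instead analyze the interpolation polynomial $p$ directly through its Hermite--Genocchi/divided-difference expansion and track the sign of $p(A)$ componentwise against the entrywise nonnegative powers $I, A, A^2, \ldots, A^{n-1}$.
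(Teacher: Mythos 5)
Your proposal is correct and takes essentially the same route as the paper: the ``general characterization'' you invoke is precisely the Micchelli--Willoughby theorem quoted as Theorem~\ref{Tmicc}, and the paper's Corollary~\ref{cor:strong_conj} settles the conjecture by exactly the derivative/divided-difference check you carry out for $f(x)=x^\alpha$. The density reduction to distinct eigenvalues is unnecessary once Micchelli--Willoughby is in hand, but it does no harm.
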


The goal of this note is to settle this conjecture and prove that indeed
$m(n) = n-2$ for all $1 < n \in \N$. More generally, we prove this
conjecture (in fact, a stronger statement of it) for a larger class of
functions. In order to state our main result, we first need some
notation.

\begin{definition}
Let $n \geq 2$ be an integer. Denote the set of doubly nonnegative $n \times n$ matrices by $DN_n$. Now
given a function $f : [0,\infty) \to [0,\infty)$, define the {\em
critical exponent} for $f$ to be
\begin{equation}
m(f,n) := \inf\ \{0 \leq t \in \R \ : \ A^\alpha f(A) \in DN_n \ \forall A \in
DN_n,\forall \alpha \geq t \}.
\end{equation}
\end{definition}

We now state the main result of this paper.

\begin{theorem}[Critical exponent conjecture, stronger form]\label{Tmain}
Given $u>0$ and $\beta \in \R$, the critical exponent for $f(x) :=
(x+u)^{-\beta}$ is $m(f,n) = n-2+\max(\beta,0)$. Moreover, below the critical exponent there are only
finitely many values $0 \leq \alpha < m(f,n)$ such that the map $g(x)
= x^\alpha (x+u)^{-\beta}$ preserves $DN_n$, and these are contained in
the set 
\begin{equation}\label{Eexcept}
\{m +\max(\beta,0) : m=0, 1, \dots, n-3\}.
\end{equation} 
\end{theorem}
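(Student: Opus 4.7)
The plan is to reduce Theorem \ref{Tmain} to a scalar analytic condition on $g(x) := x^\alpha(x+u)^{-\beta}$ by invoking a general characterization of functions preserving $DN_n$ under spectral calculus (the ``general characterization from the literature'' alluded to in the introduction). Such characterizations typically assert that a continuous $g: [0,\infty) \to [0,\infty)$ preserves $DN_n$ iff the derivatives $g^{(k)}$ for $0 \le k \le n-1$ are nonnegative on $(0,\infty)$, or $g$ agrees with a polynomial of a specific interpolation type---yielding both a generic derivative threshold and a finite list of polynomial-type exceptions. The strategy is to apply this dichotomy directly to $g$.

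Granting this reduction, the heart of the argument becomes a sign analysis of the Leibniz expansion
\begin{equation*}
g^{(k)}(x) = \sum_{j=0}^{k} \binom{k}{j}\Bigl[\prod_{i=0}^{j-1}(\alpha-i)\Bigr](-1)^{k-j}\Bigl[\prod_{i=0}^{k-j-1}(\beta+i)\Bigr] x^{\alpha-j}(x+u)^{-\beta-(k-j)}
\end{equation*}
for $0 \le k \le n-1$. The substitution $y := x/(x+u) \in (0,1)$ converts nonnegativity of $g^{(k)}(x)$ on $(0,\infty)$ into positivity on $(0,1)$ of a degree-$k$ polynomial $P_k(y)$ whose coefficients are explicit products of a falling factorial in $\alpha$ and a rising factorial in $\beta$. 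The binding constraint comes from $k = n-1$ and produces exactly $\alpha \ge n-2 + \max(\beta,0)$; the isolated exceptional values $\alpha = m + \max(\beta,0)$ with $m \in \{0,\ldots,n-3\}$ arise because at each such $\alpha$ a falling factorial $\prod_{i=0}^{j-1}(\alpha-i)$ vanishes for some $j$, killing every ``bad-sign'' term in the expansion.

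For the matching lower bound and to exclude any further exceptional values, I would construct explicit witnesses $A \in DN_n$ where $g(A) \notin DN_n$. A standard device is a one-parameter family, for example $A_\varepsilon = \mathbf{1}\mathbf{1}^T + \varepsilon B$ for a suitable $B \in DN_n$, or a two-eigenvalue matrix $\lambda_1 P_1 + \lambda_2 P_2$ coming from complementary orthogonal projections. Taylor-expanding $g(A_\varepsilon)$ in $\varepsilon$ isolates a coefficient proportional to $g^{(n-1)}$ evaluated at a dominant eigenvalue, and its sign yields a strictly negative off-diagonal entry whenever $g^{(n-1)}$ is negative on $(0,\infty)$. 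This matches the derivative analysis and closes the two-sided bound.

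The hardest step is expected to be the positivity analysis of $P_{n-1}(y)$: the alternating signs in the Leibniz expansion create delicate cancellations, and pinning down the exact threshold $n-2+\max(\beta,0)$ together with the precise exceptional set \eqref{Eexcept} requires careful accounting of the interplay between the falling factorials in $\alpha$ and the rising factorials in $\beta$. A secondary subtlety is the boundary behavior at $x = 0$ needed to define $g(A)$ when $A$ is singular, which I would handle by a continuity argument since $g$ extends continuously to $x = 0$ whenever $\alpha \ge 0$.
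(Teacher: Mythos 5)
Your reduction to sign conditions on $g^{(k)}$, $0 \le k \le n-1$, via the Micchelli--Willoughby characterization is exactly the paper's strategy (its Theorem~\ref{Tmicc}), and the substitution $y = x/(x+u)$ is also a reasonable normalization. However, the heart of the argument --- the positivity analysis --- is left unresolved in your proposal, and this is precisely where the paper supplies the missing idea. Your Leibniz expansion expresses $P_k(y)$ in the monomial basis $\{y^{k-j}\}$ with coefficients $\binom{k}{j}\bigl[\prod(\alpha-i)\bigr](-1)^{k-j}\bigl[\prod(\beta+i)\bigr]$, whose alternating $(-1)^{k-j}$ factors make direct sign analysis genuinely hard; you acknowledge the ``delicate cancellations'' but do not resolve them. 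The paper instead proves (Lemma~\ref{lem:derivatives}, by induction with the quotient rule rather than Leibniz) the identity
\begin{equation*}
g^{(r)}(x) = \frac{x^{\alpha-r}}{(x+u)^{\beta+r}} \sum_{i=0}^r \binom{r}{i}\, u^i x^{r-i} \prod_{j=0}^{i-1}(\alpha-j)\prod_{j=i}^{r-1}(\alpha-\beta-j),
\end{equation*}
in which each summand's sign is transparent: for $\alpha \ge n-2+\max(\beta,0)$ every factor $\alpha-j$ and $\alpha-\beta-j$ appearing is nonnegative, so all terms are nonnegative. After your $y$-substitution this identity amounts to expanding $P_r$ in the \emph{Bernstein} basis $\{y^{r-i}(1-y)^i\}$, where the coefficients are single products $\binom{r}{i}\prod_{j<i}(\alpha-j)\prod_{j\ge i}(\alpha-\beta-j)$ with no alternating signs; this is the decisive change of basis your proposal lacks. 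The exceptional set and the necessity direction then fall out by examining the lowest-order term as $x\to 0^+$ (when $\beta<0$) or the highest-order term as $x\to\infty$ (when $\beta\ge 0$).

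A secondary point: since Theorem~\ref{Tmicc} is an ``if and only if,'' once some $g^{(k)}$ is negative at some point you already conclude $g$ fails to preserve $DN_n$; the explicit two-eigenvalue or rank-one-perturbation witnesses you propose for the lower bound are not needed and would only duplicate, less cleanly, what the characterization already delivers.
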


\begin{remark}
Note that the critical exponent conjecture \ref{Ccritexp} for powers by Johnson et al.~\cite{johnson_et_al_2011} is the special case where $\beta = 0$.
Additionally, Theorem \ref{Tmain} strengthens the statement
of the critical exponent conjecture significantly in two ways: (1) it
discusses the behavior of $A^\alpha f(A)$ for values of $\alpha$ lower
than the critical exponent, and (2) it considers more general functions
than conventional matrix powers.
\end{remark}

We will also see in Section \ref{S2} that showing that $g(x) =
x^{n-1}(x+u)^{-1}$ preserves $DN_n$ for all $u > 0$ actually implies the critical
exponent conjecture. This reason also motivates our trying to prove Theorem
\ref{Tmain} which is a more general result than the standard form of the critical exponent conjecture.

\section{Proof of the Strong critical exponent conjecture}

We now settle Conjecture \ref{Ccritexp} by proving the more
general result given by Theorem \ref{Tmain}. To do so, we shall employ
the following result from the literature.

\begin{theorem}[{\cite[Micchelli and Willoughby, Corollary
3.1]{micchelli_et_al_1979}}]\label{Tmicc}
Suppose $g(x) \geq 0$ is continuous for $x \geq 0$. Then $g$ preserves
$DN_n$ if and only if all the divided differences of $g$ in $[0,\infty)$
of order $1, 2, \dots, n-1$ are nonnegative.
\end{theorem}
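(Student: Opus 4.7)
The plan is to prove both directions of the equivalence. Denote by $A = \sum_{k=1}^n \lambda_k u_k u_k^T$ the spectral decomposition of $A \in DN_n$, with eigenvalues $\lambda_1 \leq \cdots \leq \lambda_n$, so that $g(A) = \sum_k g(\lambda_k)\, u_k u_k^T$. A central tool throughout is Newton's divided difference expansion
\begin{equation*}
g(A) \;=\; \sum_{k=0}^{n-1} g[\lambda_1, \ldots, \lambda_{k+1}]\, M_k,
\qquad M_k := \prod_{i=1}^{k}(A - \lambda_i \Id),
\end{equation*}
which is valid because $g$ agrees on $\{\lambda_1, \ldots, \lambda_n\}$ with its unique Lagrange interpolant of degree at most $n-1$.

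For the necessity direction, I would fix $2 \leq k \leq n$ and arbitrary nodes $0 \leq x_0 < x_1 < \cdots < x_{k-1}$, and construct a doubly nonnegative matrix $A \in DN_n$ whose spectrum is $\{x_0, \ldots, x_{k-1}\}$ (padded by additional nonnegative eigenvalues if $k < n$) and for which a specific entry of $g(A)$ is a positive multiple of $g[x_0, \ldots, x_{k-1}]$, modulo easily controlled correction terms depending only on the added eigenvalues. Since $g(A)_{ij} = \sum_\ell u_{i\ell} u_{j\ell}\, g(\lambda_\ell)$ and the divided difference has the representation $g[x_0, \ldots, x_{k-1}] = \sum_\ell g(x_\ell)/\prod_{m \neq \ell}(x_\ell - x_m)$, the strategy is to engineer the spectral decomposition so that the coordinate products $u_{i\ell} u_{j\ell}$ match the signed Lagrange weights on the target nodes. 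A diagonal-plus-rank-one construction $A = D + c\, u u^T$ with carefully chosen $D, u, c$ realizes this, and the assumption that $g$ preserves $DN_n$ then forces $g[x_0, \ldots, x_{k-1}] \geq 0$.

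For the sufficiency direction, positive semidefiniteness of $g(A)$ is automatic from $g \geq 0$ on the spectrum, so the content is entrywise nonnegativity of $g(A)$. Using the Newton expansion above, since the coefficients are nonnegative by hypothesis, it suffices to establish that every Newton basis matrix $M_k$ is entrywise nonnegative for $A \in DN_n$ and $0 \leq k \leq n-1$. The endpoint cases are accessible: $M_0 = \Id$ is trivial; $M_1 = A - \lambda_1 \Id$ has nonnegative off-diagonal entries (inherited from $A$) and nonnegative diagonal entries because $\lambda_1 \leq \min_i A_{ii}$ by the Rayleigh quotient with $v = e_i$; and $M_{n-1} = \bigl(\prod_{i < n}(\lambda_n - \lambda_i)\bigr)\, P_n$ is a positive multiple of the spectral projector onto the top eigenspace, which is entrywise nonnegative by the Perron--Frobenius theorem applied to the entrywise nonnegative matrix $A$.

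The main obstacle is verifying entrywise nonnegativity of the intermediate products $M_k$ for $2 \leq k \leq n-2$. The identity $M_k = \sum_{j > k} \bigl(\prod_{i \leq k}(\lambda_j - \lambda_i)\bigr)\, P_j$ exhibits $M_k$ as a positive combination of the top $n-k$ spectral projectors, each of which in general fails to be entrywise nonnegative, so one must exploit a cancellation intrinsic to this particular combination. I would attempt two complementary routes: an iterated Perron--Frobenius argument that peels off the smallest eigenvalue (observing that $A - \lambda_1 \Id$ is itself doubly nonnegative with top eigenvalue $\lambda_n - \lambda_1$ and the same Perron eigenvector, and then inducting on the number of factors); and an alternative approximation route that represents any admissible $g$ as a limit of nonnegative combinations of $DN_n$-preserving functions of the form $(x - a)_+^{n-1}$ (whose preservation can be verified more directly) together with nonneg-coefficient polynomials of degree at most $n-2$, via a Peano-kernel representation for functions with nonnegative divided differences of orders up to $n-1$. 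Making either route rigorous constitutes the technical heart of the theorem.
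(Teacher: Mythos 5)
Your outline matches the route the paper itself attributes to Micchelli and Willoughby: expand $g(A)$ in the Newton form with divided-difference coefficients, note that positive semidefiniteness is automatic, and reduce entrywise nonnegativity to the single key claim that each Newton basis matrix $M_k = (A-\lambda_1 \Id)\cdots(A-\lambda_k \Id)$ lies in $DN_n$ for every $A \in DN_n$ and $1 \leq k \leq n-1$. But that claim is precisely the content of the theorem in this direction, and you do not prove it: you verify only the easy endpoint cases $k=1$ and $k=n-1$ and explicitly defer the intermediate $2 \leq k \leq n-2$, which you yourself call the technical heart. Moreover, the first route you sketch for closing this gap is circular: writing $B = A - \lambda_1 \Id$ (which is indeed doubly nonnegative) and substituting gives $M_k = \prod_{i=1}^{k}(B - \mu_i \Id)$ with $\mu_i = \lambda_i - \lambda_1$ exactly the $k$ smallest eigenvalues of $B$, i.e.\ the identical problem for $B$, so "peeling off the smallest eigenvalue" achieves no reduction. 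The second route (a Peano-kernel decomposition into truncated powers $(x-a)_+^{n-1}$ plus low-degree polynomials) shifts the burden to showing that $(x-a)_+^{n-1}$ preserves $DN_n$, which is again essentially the same unproved statement in disguise; neither route is carried out.

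The necessity direction is likewise only a plan: you describe a diagonal-plus-rank-one construction intended to make $g(A)_{ij}$ a positive multiple of $g[x_0,\ldots,x_{k-1}]$, but you never exhibit the matrix, never verify that it is entrywise nonnegative, and never check that the eigenvector products $u_{i\ell}u_{j\ell}$ can be made proportional to the alternating-sign Lagrange weights $1/\prod_{m\neq\ell}(x_\ell-x_m)$ while keeping $A \in DN_n$ and controlling the padded eigenvalues. In short, you have correctly reconstructed the architecture of the Micchelli--Willoughby argument (which is all the paper records, since it cites their Corollary 3.1 rather than reproving it), but both directions stop at the point where the real work begins, and the one concrete idea offered for the crucial lemma does not work as stated.
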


\noindent Theorem \ref{Tmicc} is shown by replacing $g$ by the Newton
interpolation polynomial taking values $g(\lambda_i)$ at the eigenvalues
$0 \leq \lambda_1 \leq \dots \leq \lambda_n$ of $A$, and proving that each
summand is doubly nonnegative, i.e., 
\begin{equation}
(A-\lambda_1 \Id)\cdots (A-\lambda_k \Id) \in DN_n \qquad \forall A \in
DN_n,\ 1 \leq k \leq n-1.
\end{equation}

Note that if $g$ is $C^{n-1}$, then $g$ preserves $DN_n$ if and only
if $g^{(i)}(x) \geq 0$ for all $0 \leq i < n$ and $x > 0$ (by the mean
value theorem for divided differences). As a consequence, we have the following corollary. 

\begin{corollary}\label{cor:strong_conj}
Conjecture \ref{Ccritexp} holds for all integers $n \geq 2$. Moreover, $g(x) = x^\alpha$ preserves $DN_n$ if and only if $\alpha \in \N \cup [n-2, \infty)$. 
\end{corollary}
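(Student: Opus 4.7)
The plan is to invoke Theorem~\ref{Tmicc} (Micchelli--Willoughby) through the simpler derivative criterion stated in the remark that follows it. Since $g(x) = x^\alpha$ is $C^\infty$ on $(0,\infty)$, $g$ preserves $DN_n$ if and only if $g^{(i)}(x) \geq 0$ for every $x > 0$ and every $0 \leq i \leq n-1$. A direct computation gives $g^{(i)}(x) = \alpha^{\underline{i}}\, x^{\alpha - i}$, where $\alpha^{\underline{i}} := \alpha(\alpha - 1)\cdots(\alpha - i + 1)$ is the falling factorial (with $\alpha^{\underline{0}} := 1$). Since $x^{\alpha - i} > 0$ on $(0,\infty)$, the question reduces to characterizing those $\alpha \geq 0$ for which $\alpha^{\underline{i}} \geq 0$ for all $1 \leq i \leq n-1$.

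The sign analysis is then elementary, and splits into three cases. First, if $\alpha \geq n-2$, every factor $\alpha - j$ with $0 \leq j \leq n-2$ is nonnegative, so $\alpha^{\underline{i}} \geq 0$ for all $i \leq n-1$. Second, if $\alpha = m \in \N$, then either $i \leq m$ and every factor of $\alpha^{\underline{i}}$ is nonnegative, or $i \geq m+1$ and the factor $m - m = 0$ appears, forcing the product to vanish. Third and conversely, if $\alpha \in (0, n-2)$ is not an integer, set $i := \lceil \alpha \rceil + 1$; then $i \leq n-1$, the factors $\alpha, \alpha-1, \dots, \alpha - \lceil\alpha\rceil + 1$ are all positive, and the final factor $\alpha - \lceil \alpha \rceil$ is strictly negative, so $\alpha^{\underline{i}} < 0$ and such $\alpha$ fail the criterion.

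Combining the three cases yields exactly the claimed classification: $g(x) = x^\alpha$ preserves $DN_n$ if and only if $\alpha \in \N \cup [n-2, \infty)$. The equality $m(n) = n-2$ is then immediate (the lower bound is already in Theorem~\ref{Tjohnson}, and the upper bound follows from the first case above), settling Conjecture~\ref{Ccritexp}. There is no genuine obstacle in this argument: essentially all the content lives in Theorem~\ref{Tmicc}, which converts a matrix-theoretic question into a one-variable sign analysis. The only point requiring any care is to identify the critical index $i = \lceil \alpha \rceil + 1$ that exhibits the sign failure for each non-integer $\alpha \in (0,n-2)$.
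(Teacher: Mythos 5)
Your proof is correct and takes essentially the same route as the paper: apply the $C^{n-1}$ derivative reformulation of Theorem~\ref{Tmicc} to $g(x)=x^\alpha$ and analyze the sign of the falling factorial $\alpha^{\underline{i}}$. The paper simply calls this ``immediate''; you have written out the (correct) elementary sign analysis, including the useful observation that $i=\lceil\alpha\rceil+1$ is the first order at which the criterion fails for non-integer $\alpha<n-2$.
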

\begin{proof}
Immediate by considering $g(x) = x^\alpha$. 
\end{proof}

\noindent Note that the set of powers which preserves $DN_n$ for conventional powers (Corollary \ref{cor:strong_conj}) is exactly the same as for Hadamard powers \cite{FitzHorn}: $\alpha \in \N \cup [n-2,\infty)$.

\begin{remark}
Besides settling the critical exponent conjecture for powers, Theorem \ref{Tmicc} can also be used to establish the existence of a critical exponent in very general settings. For instance, a family of smooth
functions $(f_\alpha)_{\alpha \geq 0}$ will have a ``critical index''
$m(n)$ for $DN_n$, if and only if 
\begin{equation}\label{eqn:pos_der}
f_\alpha^{(i)}(x) \geq 0, \qquad \forall x > 0,\ i=0, 1, \dots, n-1
\end{equation}

\noindent for all $\alpha$ large enough. A stronger statement would be
that \eqref{eqn:pos_der} holds if and only if $\alpha \in S(n) \cup [m(n), \infty)$ for some finite set $S(n)$. We show in Theorems
\ref{Tmain} and \ref{Texp} that this is indeed the case for large
families of functions.
\end{remark}

To prove Theorem \ref{Tmain} using Theorem \ref{Tmicc}, we need the
following preliminary result. 

\begin{lemma}\label{lem:derivatives}
For all $\alpha, \beta \in \R$,  $u, x > 0$, and $0 \leq r \in \Z$,
\begin{align*}
\frac{d^r}{d x^r} \frac{x^\alpha}{(x+u)^\beta} = &\ \frac{x^{\alpha -
r}}{(x+u)^{\beta + r}} \sum_{i=0}^r \binom{r}{i} u^i x^{r-i}
\prod_{j=0}^{i-1} (\alpha -j) \prod_{j=i}^{r-1} (\alpha -\beta -j)\\
= &\ \frac{x^{\alpha - r}}{(x+u)^{\beta + r}} \sum_{i=0}^r \binom{r}{i}
u^i x^{r-i} \frac{\Gamma(\alpha + 1) \Gamma(\alpha - \beta -
i+1)}{\Gamma(\alpha-i+1) \Gamma(\alpha-\beta-r+1)}.
\end{align*}
\end{lemma}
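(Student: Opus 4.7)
The plan is a straightforward induction on $r$, combined at the end with a single elementary binomial identity. The base case $r = 0$ is immediate: the sum contains only the $i=0$ term, with both products empty and hence equal to $1$, recovering $x^{\alpha}/(x+u)^{\beta}$ on the right-hand side.

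For the inductive step, I would introduce the shorthand
\[
P_i^{(r)} := \prod_{j=0}^{i-1}(\alpha-j)\prod_{j=i}^{r-1}(\alpha-\beta-j), \qquad F_r(x) := \sum_{i=0}^{r} \binom{r}{i} u^i x^{r-i} P_i^{(r)},
\]
so the induction hypothesis reads $\frac{d^r}{dx^r}\bigl[x^\alpha (x+u)^{-\beta}\bigr] = x^{\alpha-r}(x+u)^{-\beta-r} F_r(x)$. Differentiating once more with the product and quotient rules and pulling out the factor $x^{\alpha-r-1}(x+u)^{-\beta-r-1}$ yields
\[
h^{(r+1)}(x) = \frac{x^{\alpha-r-1}}{(x+u)^{\beta+r+1}} \Bigl[(\alpha-\beta-2r)\, x F_r(x) + (\alpha-r)\, u F_r(x) + x(x+u)\, F_r'(x)\Bigr],
\]
so the remaining task is to verify that this bracket equals $F_{r+1}(x)$.

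Setting $c_i^{(s)} := \binom{s}{i} P_i^{(s)}$ (with $c_{-1}^{(r)} = c_{r+1}^{(r)} := 0$), expanding $x(x+u)F_r'(x)$, and collecting coefficients of $u^i x^{r+1-i}$ on both sides reduces the identity bracket $= F_{r+1}$ to the three-term recurrence
\[
c_i^{(r+1)} = (\alpha - \beta - r - i)\, c_i^{(r)} + (\alpha - i + 1)\, c_{i-1}^{(r)}.
\]
Substituting the explicit form of $c_i^{(s)}$ and factoring out the common product $\bigl(\prod_{j=0}^{i-2}(\alpha-j)\bigr)\bigl(\prod_{j=i}^{r-1}(\alpha-\beta-j)\bigr)(\alpha-i+1)$ from both sides, the recurrence collapses to the purely combinatorial statement
\[
\binom{r+1}{i}(\alpha-\beta-r) = (\alpha-\beta-r-i)\binom{r}{i} + (\alpha-\beta-i+1)\binom{r}{i-1},
\]
which via Pascal's rule $\binom{r+1}{i} = \binom{r}{i} + \binom{r}{i-1}$ becomes the textbook identity $i\binom{r}{i} = (r-i+1)\binom{r}{i-1}$. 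This closes the induction.

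The second equality in the lemma, rewriting the products as ratios of Gamma functions, follows at once from the Pochhammer identities $\prod_{j=0}^{i-1}(\alpha-j) = \Gamma(\alpha+1)/\Gamma(\alpha-i+1)$ and $\prod_{j=i}^{r-1}(\alpha-\beta-j) = \Gamma(\alpha-\beta-i+1)/\Gamma(\alpha-\beta-r+1)$, valid whenever the arguments avoid nonpositive integers. The only real obstacle in executing this plan is careful bookkeeping when collecting the coefficients of $u^i x^{r+1-i}$ in the bracket (the endpoints $i=0$ and $i=r+1$ need a brief separate check); there is no conceptual difficulty beyond that.
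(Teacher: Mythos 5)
Your argument is correct and follows essentially the paper's own route: both proofs proceed by induction on $r$, differentiate the $r$th-derivative formula, and reduce the coefficient-matching in $u^i x^{r+1-i}$ to the identical three-term recurrence $c_i^{(r+1)} = (\alpha-\beta-r-i)\,c_i^{(r)} + (\alpha-i+1)\,c_{i-1}^{(r)}$, finishing by cancelling the common product factor. The only stylistic difference is that you differentiate the entire factored form $x^{\alpha-r}(x+u)^{-\beta-r}F_r(x)$ at once and explicitly close the resulting identity via Pascal's rule and $i\binom{r}{i}=(r-i+1)\binom{r}{i-1}$, whereas the paper applies the quotient rule term by term and leaves the final polynomial identity as a ``straightforward computation.''
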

\begin{proof}
The proof is by induction on $r \geq 0$; the base case is clear. Now
given the result for $r$, differentiate the right-hand side using the
quotient rule. A typical summand involves differentiating
$\frac{x^{\alpha - i}}{(x+u)^{\beta + r}}$; the derivative is
\begin{align*}
&\ \frac{(x+u)^{\beta + r}(\alpha - i) x^{\alpha - i - 1} - (\beta + r)
(x+u)^{\beta + r-1} x^{\alpha - i}}{(x+u)^{2(\beta + r)}}\\
= &\ \frac{x^{\alpha - i - 1}}{(x+u)^{\beta + r + 1}} \left( (\alpha -
i)(x+u) - (\beta + r)x \right)\\
= &\ \frac{1}{(x+u)^{\beta + r + 1}} \left( (\alpha - \beta - r -
i)x^{\alpha - i} + (\alpha - i) u x^{\alpha - i - 1} \right).
\end{align*}

\noindent Now multiply this quantity by the coefficient $\binom{r}{i} u^i
\Gamma_{r,i}$, where
\[ \Gamma_{r,i} := \frac{\Gamma(\alpha + 1) \Gamma(\alpha - \beta -
i+1)}{\Gamma(\alpha-i+1) \Gamma(\alpha-\beta-r+1)}. \]

\noindent Add the products from $i=0$ to $r$, and collect like powers of
$u$ to get the $(r+1)$th derivative. We claim that this is indeed of the
claimed form, which would complete the induction step and show the
result. But this is a straightforward verification for each term: the
coefficient of $u^0$ in the $(r+1)$th derivative comes from only the
$i=0$ term:
\[ \frac{x^\alpha (\alpha - \beta - r)}{(x+u)^{\beta + r + 1}}
\Gamma_{r,0} = \frac{x^\alpha}{(x+u)^{\beta + r + 1}} \Gamma_{r+1,0} \]

\noindent as desired. Similarly, the coefficient of $u^{r+1}$ also comes
from only the $i=r$ term:
\[ \frac{x^{\alpha-r-1} (\alpha - r)}{(x+u)^{\beta + r + 1}}
\Gamma_{r,r} = \frac{x^{\alpha-r-1}}{(x+u)^{\beta + r + 1}}
\Gamma_{r+1,r+1}. \]

\noindent It remains to show that the coefficient of $\displaystyle
\frac{x^{\alpha - i} u^i}{(x+u)^{\beta + r + 1}}$ (for $1 \leq i \leq r$)
in the sum in the $(r+1)$th derivative, which equals $\binom{r+1}{i}
\Gamma_{r+1,i}$, comes from adding up the two terms in the above sum of
derivatives. In other words, it suffices to show that
\[ \binom{r}{i} (\alpha - \beta - r - i) \Gamma_{r,i} +
\binom{r}{i-1} (\alpha - i + 1) \Gamma_{r,i-1} = \binom{r+1}{i}
\Gamma_{r+1,i}. \]

\noindent Now note that each of the three terms $\binom{r}{i-1}
\Gamma_{r,i-1}, \binom{r}{i} \Gamma_{r,i}, \binom{r+1}{i} \Gamma_{r+1,i}$
have the following expression common:
\[ \frac{r!}{i! (r-i+1)!} \prod_{j=0}^{i-2} (\alpha - j)
\prod_{j=i}^{r-1} (\alpha - \beta - j). \]

\noindent Thus, it suffices to show the equality after having
``cancelled" this common expression from all terms - namely, to show that
\begin{align*}
&\ (r-i+1) (\alpha - \beta - r - i) (\alpha - i + 1) + i (\alpha - i + 1)
(\alpha - \beta - i + 1)\\
= &\ (r+1) (\alpha - i + 1)(\alpha - \beta - r).
\end{align*}

\noindent Once again, it suffices to show this without the common
expression $(\alpha - i + 1)$ in each of the terms. But this is a
straightforward computation.
\end{proof}

Finally, we now prove the main result of this note. 

\begin{proof}[Proof of Theorem \ref{Tmain}]
Suppose first that $\alpha \geq n-2 + \max(\beta,0)$. Then for all $0
\leq i < n$ and $x > 0$, the $i$th derivative
\[ \frac{d^i}{d x^i} \frac{x^\alpha}{(x+u)^\beta} \]

\noindent is indeed defined and can be computed using Lemma
\ref{lem:derivatives}. It is easy to check that all of these
derivatives are nonnegative, since $x,u > 0$, and $\alpha - (n-2)$ and $\alpha - \beta - (n-2)$ are nonnegative. By Theorem \ref{Tmicc} (see the remarks preceding Corollary \ref{cor:strong_conj}), we infer that $x^\alpha(x+u)^{-\beta}$ preserves $DN_n$
for all $\alpha \geq n-2 + \max(\beta,0)$.

To complete the proof, suppose now that $0 \leq \alpha < n-2 +
\max(\beta,0)$ and $\alpha$ is not in the set \eqref{Eexcept}.
Define $g(x) = x^\alpha (x+u)^{-\beta} : [0,\infty) \to [0,\infty)$.
By Theorem \ref{Tmicc}, it suffices to show 
that there is at least one $0 \leq i < n$ and $x > 0$ such that $g^{(i)}(x) < 0$. There are two cases:
\begin{enumerate}
\item Suppose $\beta < 0$. Now compute by Lemma \ref{lem:derivatives},
\[ g^{(m+2)}(x) = \frac{x^{\alpha - m - 2}}{(x+u)^{\beta+m+2}} \left[
u^{m+2} \prod_{j=0}^{m+1} (\alpha - j) + x \sum_{i<m+2} \binom{m+2}{i} u^i x^{m+1-i}
\prod_{j=0}^{i-1} (\alpha-j) \prod_{j=i}^{m+1} (\alpha-\beta-j) \right].
\]

\noindent Now if $\alpha \in (m,m+1)$ for some integer $m \in [0,n-3]$,
then $u^{m+2} \prod_{j=0}^{m+1} (\alpha - j)$ is the smallest degree term in $x$ and 
is negative. Therefore $g^{(m+2)}(x) < 0$ for small $x>0$. 

\item Suppose $\beta \geq 0$. Once again using Lemma
\ref{lem:derivatives},
\[ g^{(m+2)}(x) = \frac{x^{\alpha - m - 2}}{(x+u)^{\beta+m+2}} \left[
x^{m+2} \prod_{j=0}^{m+1} (\alpha - \beta - j) + x \sum_{i>0} \binom{m+2}{i} u^i
x^{m+1-i} \prod_{j=0}^{i-1} (\alpha-j) \prod_{j=i}^{m+1} (\alpha-\beta-j)
\right]. \]

\noindent Now if $\alpha - \beta \in (m,m+1)$ for
some integer $m \in [0,n-3]$, then a similar analysis as in the previous
case, but this time with $x \to \infty$, implies that $g^{(m+2)}(x) < 0$
for some $x>0$.
\end{enumerate}
\end{proof}

\subsection{Alternate proof}\label{S2}

We now provide an alternate proof of the original critical exponent
conjecture in \cite{johnson_et_al_2011}. Similar to Johnson et al.'s result in \cite{johnson_et_al_2011} (see Theorem \ref{Tjohnson}), we only show the conjecture (i.e., that $m(f,n) = n-2$ for $f(x) \equiv
1$) for small values of $n$, although we also indicate an
approach for all $n$.

The first simplification is in showing that to verify whether or not a
given function $f$ preserves $DN_n$, it is enough to focus on a
particular off-diagonal entry:

\begin{lemma}\label{Lperm}
Given $f : [0,\infty) \to [0,\infty)$ and a fixed pair of integers $1 \leq i \neq j \leq
n$, the matrix $f(A)$ is doubly nonnegative for all $A \in DN_n$ if and only if 
$(f(A))_{ij} \geq 0$ for each $A \in DN_n$.
\end{lemma}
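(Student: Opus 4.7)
The plan is to exploit the invariance of the class $DN_n$ and of the spectral functional calculus under conjugation by permutation matrices, together with the elementary fact that $f(A)$ is automatically positive semidefinite.

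First I would observe that the forward direction is trivial. For the converse, note that since $f$ takes values in $[0,\infty)$ and $A \in DN_n$ has nonnegative eigenvalues $\lambda_1,\dots,\lambda_n$, the spectral definition $f(A)=U\,\diag(f(\lambda_1),\dots,f(\lambda_n))\,U^T$ shows that $f(A)$ is automatically positive semidefinite. Consequently, the only content of the conclusion ``$f(A)\in DN_n$'' is entrywise nonnegativity of $f(A)$. Moreover, diagonal entries of any positive semidefinite matrix are nonnegative, so I only need to handle off-diagonal entries.

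Next, the key step: for any permutation matrix $P$, the matrix $PAP^T$ lies in $DN_n$ (it is symmetric, entrywise nonnegative, and orthogonally similar to $A$), and the spectral calculus is equivariant, i.e., $f(PAP^T)=P\,f(A)\,P^T$. This follows because if $A=UDU^T$ then $PAP^T=(PU)D(PU)^T$ is a spectral decomposition, so $f(PAP^T)=(PU)f(D)(PU)^T=P\,f(A)\,P^T$. Given an arbitrary pair of indices $k\neq \ell$, choose a permutation $\pi$ with $\pi(i)=k$, $\pi(j)=\ell$, and let $P$ be the corresponding permutation matrix. Then
\[
(f(A))_{k\ell}=(P\,f(A)\,P^T)_{ij}\cdot \text{(up to correct indexing)} =\bigl(f(P^T A P)\bigr)_{ij}\geq 0
\]
by applying the hypothesis to the matrix $P^T A P\in DN_n$. (One has to be slightly careful to put $P$ on the correct side so that the $(i,j)$-entry of the conjugated matrix equals the $(k,\ell)$-entry of $f(A)$; this is just unwinding the definition of the action of permutation matrices on entries.)

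I do not anticipate a real obstacle here: the argument is mostly bookkeeping, and the only non-cosmetic ingredients are (i) that $f\geq 0$ makes PSD-ness automatic, and (ii) that the spectral calculus commutes with orthogonal (in particular permutation) conjugation. The mildly fiddly point is getting the direction of the permutation consistent when translating ``$(i,j)$-entry of $f(P^T A P)$'' back to ``$(k,\ell)$-entry of $f(A)$'', but this is a one-line index chase.
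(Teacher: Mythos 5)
Your proof is correct and follows essentially the same route as the paper: observe that $f(A)$ is automatically positive semidefinite (so diagonal entries are handled), then use the equivariance $f(PAP^T)=Pf(A)P^T$ under permutation conjugation, together with the fact that conjugation by a permutation matrix preserves $DN_n$, to transfer nonnegativity of the fixed $(i,j)$-entry to all off-diagonal entries.
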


\begin{proof}
One implication is immediate from the definition. Conversely, given any $A = U D U^T$ with $U$ orthogonal and $D$ diagonal, note that $f(A) = U f(D) U^T$ is positive semidefinite. Hence its diagonal entries are nonnegative. We now consider the off-diagonal entries of $f(A)$. Note that for every $n$-by-$n$ permutation matrix $P$, 
\[ f(P A P^T) = f((PU) D (PU)^T) = (PU) f(D) (PU)^T = P f(A) P^T. \]

\noindent Since conjugating by $P$ preserves $DN_n$, hence $(P f(A)
P^T)_{ij} \geq 0$ for all $P,A$. This is equivalent to saying that
$f(A)_{i'j'} \geq 0$ for all $i' \neq j'$. 
\end{proof}

Next, note by Theorem \ref{Tmicc} and Lemma \ref{lem:derivatives}, that for every $n \in \N$,
\begin{equation}\label{Efact}
A^{n-1}(A+u \Id)^{-1} \in DN_n, \qquad \forall A \in DN_n, u>0.
\end{equation}

\noindent We now show that this fact alone implies the critical exponent
conjecture \ref{Ccritexp}.

\begin{proposition}\label{Preduction}
Fix an integer $n = n_0 \geq 2$. Then Equation \eqref{Efact} for $n = n_0$ implies the critical exponent
conjecture \ref{Ccritexp} for $n = n_0$.
\end{proposition}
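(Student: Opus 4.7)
The plan is to express the power function $x^\alpha$, for any $\alpha \geq n_0 - 2$, as a nonnegative superposition of the functions $x \mapsto x^{n_0-1}/(x + u)$ over $u > 0$, and then to exploit closure of $DN_{n_0}$ under such superpositions. The entire conjecture will then reduce to the single fact \eqref{Efact}.

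The key ingredient is the classical integral identity
\[
x^s = \frac{\sin(\pi s)}{\pi} \int_0^\infty \frac{u^{s-1} x}{u + x}\, du, \qquad s \in (0,1),\ x \geq 0,
\]
which follows from the substitution $u = xt$ together with the beta integral $\int_0^\infty t^{s-1}/(t+1)\, dt = \pi/\sin(\pi s)$. Multiplying through by $x^{n_0-2}$ and transferring to the spectral calculus, for $\alpha = (n_0 - 2) + s$ with $s \in (0,1)$ and any $A \in DN_{n_0}$ one obtains
\[
A^\alpha = \frac{\sin(\pi s)}{\pi} \int_0^\infty u^{s-1}\, A^{n_0-1}(A + u\Id)^{-1}\, du,
\]
interpreted entrywise (convergence is immediate, since the integrand is $O(u^{s-1})$ near $0$ and $O(u^{s-2})$ near infinity). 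By hypothesis \eqref{Efact} every matrix in the integrand lies in $DN_{n_0}$, the weight $u^{s-1}$ is nonnegative, and $DN_{n_0}$ is stable under integration against nonnegative measures (test positive semidefiniteness against an arbitrary vector, and entrywise nonnegativity entry by entry). Hence $A^\alpha \in DN_{n_0}$ for every $\alpha \in (n_0-2, n_0-1)$. The endpoint $\alpha = n_0 - 2$ follows by letting $s \to 0^+$; and for arbitrary $\alpha \geq n_0 - 1$, I would write $\alpha = \alpha_0 + k$ with $\alpha_0 \in [n_0 - 2, n_0 - 1)$ and $k \in \Z_{\geq 1}$, and observe that $A^{\alpha_0}$ and $A^k$ commute, so $A^\alpha = A^{\alpha_0} A^k$ is simultaneously PSD (product of commuting PSDs) and entrywise nonnegative (ordinary matrix product of entrywise nonnegative matrices).

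The only step that demands genuine care is the integral representation itself and its interchange with the spectral decomposition at the matrix level; once that is in place, the proposition becomes a convex-cone closure argument, and the single instance $\beta = 1$, $\alpha = n_0 - 1$ of Theorem \ref{Tmain} recorded in \eqref{Efact} is enough to recover the full critical exponent conjecture \ref{Ccritexp}.
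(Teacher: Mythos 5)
Your proposal is correct and takes essentially the same route as the paper: both lift the beta-integral representation $x^s = \tfrac{\sin(\pi s)}{\pi}\int_0^\infty u^{s-1}x(x+u)^{-1}\,du$ to the matrix level and reduce everything to the single fact \eqref{Efact} together with closure of $DN_{n_0}$ under integration against nonnegative weights. The only cosmetic difference is that you keep the exponent inside the integral fixed at $n_0-1$ and peel off the integer power $A^k$ afterward, whereas the paper places $A^{k+1}(A+t\Id)^{-1}$ with $k+1\ge n_0-1$ inside the integrand and cites \eqref{Efact} directly (implicitly using the same commuting-product factorization you spell out); you should also record, as the paper does, the previously known lower bound $m(n_0)\ge n_0-2$ so that your argument yields the equality $m(n_0)=n_0-2$ rather than only the upper bound.
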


\begin{proof}
By \cite[Theorem 3.2]{johnson_et_al_2011}, we know that $m(n_0) \geq n_0-2$.
To show that $m(n_0) = n_0-2$, assume first that $A \in
DN_{n_0}$ is nonsingular. Clearly, $A^k$ is doubly nonnegative if $A \in DN_{n_0}$ and $k
\in \N$. Therefore, it suffices to prove that $A^q$ is doubly nonnegative
for every $k < q < k+1$ and every integer $k \geq n_0-2$. To show this,
note that for $k < q < k+1$ and $x>0$, the following formula holds:
\begin{equation}\label{eqn:contour_int}
x^q = \frac{\sin((q-k)\pi)}{\pi} \int_0^\infty t^{q-k-1} x^{k+1}
(x+t)^{-1}\ dt.
\end{equation}

\noindent Equation \eqref{eqn:contour_int} is shown by a standard contour integration followed by an
application of the residues theorem (see e.g.~\cite[Chapter V, Example
2.12]{conway_complex}). But now the matrix $A^q = U D^q U^T$ can be
written as:
\begin{equation}\label{Ebhatia}
A^q = \frac{\sin((q-k)\pi)}{\pi} \int_0^\infty t^{q-k-1} A^{k+1} (A+t
\Id)^{-1}\ dt
= \frac{\sin((q-k)\pi)}{\pi} \int_0^\infty t^{q-k-1} f_{k,t}(A)\ dt,
\end{equation} 

\noindent where $f_{k,t}(x) := x^{k+1} (x+t)^{-1}$ for $t \geq 0$ and $x
> 0$. Note that $\sin((q-k)\pi) > 0$ since $0 < q-k < 1$. Moreover, since
for every $A \in DN_{n_0}$ and $k \geq n_0-2$, the matrix $f_{k,t}(A)$ is
doubly nonnegative by Equation \eqref{Efact}, it follows immediately that
$A^q$ is doubly nonnegative. 

Finally, if $A$ is singular, then $A^q = \lim_{u \rightarrow 0^+} (A + u\Id)^q$ and the result follows. 
\end{proof}

\begin{remark}
Equation \eqref{Ebhatia} is frequently used in the study of positive
definite matrices. See e.g.~Bhatia \cite[Theorem
1.5.8]{bhatiaPDM}.
\end{remark}

Given Proposition \ref{Preduction}, it is thus of interest to prove Equation \eqref{Efact} from first
principles in order to provide an elementary proof of the 
critical exponent conjecture in \cite{johnson_et_al_2011}.
We conclude this section with a case-by-case verification of Equation \eqref{Efact} for
$n \leq 3$. Such an approach (but using different arguments) was adopted by Johnson {\it et
al.}~in \cite{johnson_et_al_2011} when they showed the conjecture for
$n<6$.

\begin{proposition}
The critical exponent conjecture \ref{Ccritexp} holds for $n \leq 3$.
\end{proposition}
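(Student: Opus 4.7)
The plan is to invoke Proposition \ref{Preduction}, which reduces the critical exponent conjecture at $n=n_0$ to the verification of Equation \eqref{Efact} at $n=n_0$. So it suffices to show, for $n_0\in\{2,3\}$, that $M:=A^{n-1}(A+u\Id)^{-1}\in DN_n$ for every $A\in DN_n$ and every $u>0$. The matrix $M$ is automatically positive semidefinite since its eigenvalues are $\lambda_i^{n-1}/(\lambda_i+u)\geq 0$. Therefore, by Lemma \ref{Lperm}, it is enough to verify the nonnegativity of a single off-diagonal entry of $M$.

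For $n=2$, writing $A=\begin{pmatrix} a & b \\ b & c \end{pmatrix}\in DN_2$ and applying the standard $2\times 2$ inverse formula to $A+u\Id$, the $(1,2)$ entry of $A(A+u\Id)^{-1}$ computes directly to $ub/[(a+u)(c+u)-b^2]$. The denominator equals $\det(A+u\Id)>0$ and the numerator is nonnegative, finishing the case.

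For $n=3$, the polynomial identity $x^2/(x+u)=x-u+u^2/(x+u)$, applied spectrally, gives the decomposition $A^2(A+u\Id)^{-1} = A - u\Id + u^2(A+u\Id)^{-1}$, so the $(1,2)$ entry equals $A_{12} + u^2\bigl((A+u\Id)^{-1}\bigr)_{12}$. The adjugate formula yields
\[
\bigl((A+u\Id)^{-1}\bigr)_{12} = \frac{A_{13}A_{23} - A_{12}(A_{33}+u)}{\det(A+u\Id)}.
\]
Clearing the positive denominator and expanding the characteristic polynomial $\det(A+u\Id) = u^3 + u^2(A_{11}+A_{22}+A_{33}) + u\,\sigma_2(A) + \det(A)$, where $\sigma_2(A)$ denotes the sum of the principal $2\times 2$ minors, the $u^3$ terms cancel and the combined numerator simplifies to
\[
A_{12}\bigl(u^2(A_{11}+A_{22}) + u\,\sigma_2(A) + \det(A)\bigr) + u^2 A_{13}A_{23},
\]
which is nonnegative term by term: positive semidefiniteness of $A$ forces $\sigma_2(A),\det(A)\geq 0$, while entrywise nonnegativity gives $A_{ij}\geq 0$.

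No real obstacle is expected. The $n=2$ case is essentially a one-line inverse, and in the $n=3$ case the only arithmetic to track is the cancellation between the $u^3$ coming from $\det(A+u\Id)$ and the subtracted $u^2(A_{33}+u)$; everything that survives is manifestly nonnegative from the two defining properties of double nonnegativity.
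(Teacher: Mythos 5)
Your proof is correct and follows the same overall strategy as the paper: reduce via Proposition \ref{Preduction} to verifying Equation \eqref{Efact} for $n\leq 3$, use Lemma \ref{Lperm} to check only the $(1,2)$-entry, and show that entry has a nonnegative numerator over the positive denominator $\det(A+u\Id)$. Your final expression for $n=3$,
\[
A_{12}\bigl(u^2(A_{11}+A_{22}) + u\,\sigma_2(A) + \det A\bigr) + u^2 A_{13}A_{23},
\]
is identical to the paper's $a_{12}\det A + a_{12}u(C_{11}+C_{22}+C_{33}) + u^2(a_{12}a_{11}+a_{12}a_{22}+a_{13}a_{23})$, since $\sigma_2(A) = C_{11}+C_{22}+C_{33}$. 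The one genuine difference is computational: where the paper simply states the $(1,2)$-entry of $A^2(A+u\Id)^{-1}$, you derive it transparently via the scalar division identity $x^2/(x+u) = x-u+u^2/(x+u)$, which reduces the matrix computation to the single adjugate entry of $(A+u\Id)^{-1}$. This is a cleaner and more easily checkable route to the same formula, and it makes the $u^3$ cancellation visible rather than hidden; it does not change the logical structure of the argument.
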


\begin{proof}
In light of Proposition \ref{Preduction}, it suffices to show that
Equation \eqref{Efact} holds (for $n \leq 3$). First note that $A^{n-1} (A + u
\Id)^{-1}$ is clearly positive semidefinite. By using Lemma \ref{Lperm}, it now suffices to show that the
$(1,2)$-entry of $A^{n-1} (A + u \Id)^{-1}$ is nonnegative. Denote this entry by $a_{12,n}$. It is then easily shown that for $A \in DN_2$,
\[ a_{12,2} = \frac{a_{12} u}{\det (A + u \Id_2)} \geq 0. \]

\noindent Similarly, for $n=3$,
\[ a_{12,3} = \det(A + u \Id_3)^{-1} \left( a_{12} \det A + a_{12} u
(C_{11} + C_{22} + C_{33}) + u^2(a_{12} a_{11} + a_{12} a_{22} + a_{13}
a_{23}) \right), \]

\noindent where $C_{ii}$ are the principal cofactors/minors of $A$, hence
are nonnegative. This concludes the proof since all entries of $A$ are
nonnegative.
\end{proof}

\section{Generalization of the strong critical exponent conjecture}

Note that Theorem \ref{Tmicc} can be
used to systematically study and resolve the (strong) critical exponent
conjecture for broad classes of functions. We conclude this note by showing such a
result.

\begin{theorem}\label{Texp}
Let $f: [0, \infty) \rightarrow [0, \infty)$ be a $C^{n-1}$ function satisfying
\begin{equation}\label{Epositive}
f(x) > 0, \quad \lim_{x \to 0^+} f(x) > 0, \quad f^{(i)}(x) \geq 0,
\quad \lim_{x \to 0^+} f^{(i)}(x) = M_i < \infty, \quad\forall x>0, \ 0 < i < n. 
\end{equation}
Then the values $0 \leq \alpha \in \R$ such that $x^\alpha f(x)$
preserves $DN_n$ are $\N \cup [n-2,\infty)$.
In particular, the critical exponent for all such functions $f$ is
$m(f,n) = n-2$.
\end{theorem}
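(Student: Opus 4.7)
The plan is to apply Theorem \ref{Tmicc} through its $C^{n-1}$ consequence noted just before Corollary \ref{cor:strong_conj}: since $g_\alpha(x) := x^\alpha f(x)$ is $C^{n-1}$ on $(0,\infty)$, it preserves $DN_n$ if and only if $g_\alpha^{(k)}(x) \geq 0$ for every $0 \leq k < n$ and every $x > 0$. The Leibniz rule gives
\[
g_\alpha^{(k)}(x) \;=\; \sum_{i=0}^{k} \binom{k}{i} \left(\prod_{j=0}^{i-1}(\alpha-j)\right) x^{\alpha-i}\, f^{(k-i)}(x),
\]
which reduces the problem to tracking the signs of the falling factorials $\prod_{j=0}^{i-1}(\alpha-j)$, since $f^{(k-i)}(x) \geq 0$ by hypothesis \eqref{Epositive}.

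For the easy direction, I would fix $\alpha \in \N \cup [n-2,\infty)$ and show every summand above is nonnegative for $0 \leq i \leq k \leq n-1$. If $\alpha \geq n-2$, then $\alpha - j \geq \alpha-(n-2) \geq 0$ for all $j \leq i-1 \leq n-2$, so the product is nonnegative. If $\alpha \in \N$, the product either has all nonnegative factors (when $i \leq \alpha$) or vanishes (when $i > \alpha$, since the factor $\alpha - \alpha$ occurs). In both cases $g_\alpha^{(k)}(x) \geq 0$, and so $g_\alpha$ preserves $DN_n$.

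For the converse, suppose $0 \leq \alpha < n-2$ with $\alpha \notin \N$, so $\alpha \in (m, m+1)$ for some integer $m \in \{0,1,\dots,n-3\}$. I would isolate the top-order term in $g_\alpha^{(m+2)}(x)$: the $i = m+2$ summand equals
\[
\left(\prod_{j=0}^{m+1}(\alpha-j)\right) x^{\alpha - m - 2}\, f(x),
\]
and the product has exactly one negative factor, $\alpha - (m+1) < 0$, while all $\alpha - j$ for $j \leq m$ are positive. Using $\alpha - m - 2 \in (-2,-1)$ together with $f(0^+) > 0$, this term tends to $-\infty$ as $x \to 0^+$. All remaining summands ($0 \leq i \leq m+1$) carry $x^{\alpha - i}$ with $\alpha - i > -1$ and $f^{(m+2-i)}(x) \to M_{m+2-i} < \infty$ by \eqref{Epositive}, so each of them is $o(x^{\alpha - m - 2})$ as $x \to 0^+$. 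Hence $g_\alpha^{(m+2)}(x) < 0$ for sufficiently small $x>0$, and since $m+2 \leq n-1$, Theorem \ref{Tmicc} shows $g_\alpha$ does not preserve $DN_n$. The one point requiring care is the asymptotic comparison of the subleading terms near $x=0$, but the boundedness of the $M_i$ makes this routine; the deeper input, namely that the total derivative is genuinely governed by its most singular summand, is built into the hypothesis \eqref{Epositive}.
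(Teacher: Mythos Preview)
Your proof is correct and follows essentially the same approach as the paper: apply the $C^{n-1}$ consequence of Theorem \ref{Tmicc}, expand $g_\alpha^{(k)}$ by the Leibniz rule, observe that all summands are nonnegative when $\alpha \in \N \cup [n-2,\infty)$, and for $\alpha \in (m,m+1)$ with $m \leq n-3$ show that the $i=m+2$ term dominates and forces $g_\alpha^{(m+2)}(x)<0$ for small $x>0$. The paper's proof is terser in the converse direction, but your more explicit asymptotic comparison near $x=0$ is exactly the content being invoked there.
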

\begin{proof}
Set $g(x) := x^\alpha f(x)$. In order to use Theorem \ref{Tmicc}, we evaluate $g^{(i)}(x)$ using the product rule:
\[ g^{(i)}(x) = \sum_{l=0}^i \binom{i}{l} \prod_{j=0}^{l-1} (\alpha -j)
x^{\alpha -l} f^{(i-l)}(x). \]

\noindent If $\alpha \in \N$ then every product $\prod_{j=0}^{l-1}
(\alpha-j)$ that includes a negative term also includes $0$, whence all
derivatives of $g$ are nonnegative at $x>0$.
Similarly, if $\alpha \geq n-2$, then $g^{(i)}(x) \geq 0$ for $x > 0$.
Finally if $\alpha \notin \N$ and $\alpha < n-2$, assume $\alpha \in
(m,m+1)$ for some integer $m \in [0, n-3]$. Then the $l=m+2$ term
for $g^{(m+2)}(x)$ is negative, and hence $g^{(m+2)}(x) < 0$ for
small values of $x > 0$ by \eqref{Epositive}.
\end{proof}

\begin{remark}
Note that the critical exponent conjecture in
\cite{johnson_et_al_2011} is again a special case of Theorem \ref{Texp} - with $f(x) \equiv 1$.
Moreover, the cone $\mathscr{F}$ of functions $f$ satisfying
\eqref{Epositive} is a very large family that contains all absolutely
monotonic functions with nonzero constant term as well as
exponential-type functions such as $e^x$ and $\cosh(x)$. Additionally,
$\mathscr{F}$ is closed under multiplication, exponentiation,
composition, and taking nontrivial nonnegative linear combinations.
\end{remark}
\bibliographystyle{plain}
\bibliography{biblio}
\end{document}